\DeclareSymbolFont{SY}{U}{psy}{m}{n}
\DeclareMathSymbol{\emptyset}{\mathord}{SY}{'306}
\theoremstyle{plain}
\newtheorem{thm}{Theorem}[section]
\newtheorem{lemma}[thm]{Lemma}
\newtheorem{definition}[thm]{Definition}
\newtheorem{theorem}[thm]{Theorem}
\theoremstyle{definition}
\numberwithin{equation}{section}
\begin{document}
\title[Cyclicity of Cowen-Douglas tuples]{Cyclicity of Cowen-Douglas tuples}

\author{Jing Xu} \author{Shanshan Ji$^{*}$}\thanks{*Corresponding author} \author{Yufang Xie} \author{Kui Ji}

\curraddr[Jing Xu]{School of Mathematics and Science, Hebei GEO University, Shijiazhuang 050031, China}
\curraddr[Kui Ji, Shanshan Ji, and Yufang Xie]{School of Mathematics, Hebei Normal University, Shijiazhuang, Hebei 050016, China}

\email[J. Xu]{xujingmath@outlook.com}
\email[S. Ji]{jishanshan15@outlook.com}
\email[Y. Xie]{xieyufangmath@outlook.com}
\email[K. Ji]{jikui@hebtu.edu.cn}

\thanks{The authors were supported by the National Natural Science Foundation of China, Grant No. 12371129 and 12471123.}

\subjclass[2000]{Primary 47A45; Secondary 46E22, 46J20}

\keywords{Dirichlet Space, Model Theorem, Multipliers, Weights, Backward Shift Operator, Similarity}

\begin{abstract}
The study of Cowen-Douglas operators involves not only operator-theoretic tools but also complex geometry on holomorphic vector bundles. By leveraging the properties of holomorphic vector bundles, this paper investigates the cyclicity of Cowen-Douglas tuples and demonstrates conclusively that every such tuple is cyclic.
\end{abstract}

\maketitle

\section{Introduction}
The shift operator on Hilbert space is well-known. It was first pointed out by Rota \cite{Rota1960}, who regarded such operators as "universal"; this perspective was further elaborated by Foia\c{s} \cite{Foias1963}. Moreover, it is well-established that the backward unilateral shift operator with finite multiplicity is cyclic, as demonstrated by Halmos in his work on cyclic vectors \cite{Halmos1982}. As a natural generalization of the backward shift operator, Cowen-Douglas operators possess rich geometric properties. This class of operators was first introduced by Cowen and Douglas in \cite{CD1}. Later, Lin \cite{lin1988} showed that every Cowen-Douglas operator is cyclic. A natural question is whether the Cowen-Douglas tuples are cyclic. In this paper, we will given an affirmation answer.

Throughout the notes, we denote by $\Omega$ a domain (an open and connected set) in $\mathbb{C}^{m}$, and $\mathbb{Z}_{+}^{m}$ the set of $m$-tuples of nonnegative integers. For $w=(w_{1},\cdots,w_{m})\in\Omega$ and $\alpha=(\alpha_{1},\cdots,\alpha_{m}),\beta=(\beta_{1},\cdots,\beta_{m})\in\mathbb{Z}_{+}^{m}$, we set, as usual,
$$w^{\alpha}=w_{1}^{\alpha_{1}}\cdots w_{m}^{\alpha_{m}},~\alpha+\beta=(\alpha_{1}+\beta_{1},\cdots,\alpha_{m}+\beta_{m}),~\vert \alpha\vert=\alpha_{1}+\cdots+\alpha_{m},~\text{and}~\alpha!=\alpha_{1}!\cdots \alpha_{m}!.$$
Without causing confusion, denote
$$0=(0,\cdots,0),\quad \epsilon=(1,\cdots,1),\quad\text{and} \quad \epsilon_{i}=(0,\cdots,1,\cdots,0)\in\mathbb{Z}_{+}^{m}$$ with 1 on the $i$th position.

Let $\mathcal{H}$ be a complex separable Hilbert space, and let $\mathcal{L}(\mathcal{H})^{m}$ denote the space of all commuting $m$-tuples $\mathbf{T}=(T_{1},\cdots,T_{m})$ of bounded linear operators on $\mathcal{H}$. A $m$-tuple $\mathbf{T}=(T_{1},\cdots,T_{m})$ is said to have a \textit{cyclic vector} $f$ if $$\mathop{\text{span}}\limits_{\alpha\in\mathbf{Z}_{+}^{m}}\{\mathbf{T}^{\alpha}f\}=\mathcal{H},$$
where span\{\quad\} denotes the closed linear span.
Equivalently, $f$ is a cyclic vector for $\mathbf{T}$ if the set of all vectors of the form $P(\mathbf{T})f$, where $P$ ranges over all polynomials, is dense in $\mathcal{H}.$  For convenience, we say that a commuting $m$-tuple is a \textit{cyclic $m$-tuple} if it has a cyclic vector. We associate with the $m$-tuple $\mathbf{T}$ a bounded linear transformation
$$\mathscr{D}_{\mathbf{T}}: \mathcal{H}\longrightarrow\mathcal{H}\oplus\cdots\oplus\mathcal{H}\quad (m\text{~copies}),$$ defined by $\mathscr{D}_{\mathbf{T}}h=(T_{1}h,\cdots,T_{m}h)$ for $h \in \mathcal{H}$. For $\mathbf{T}-w:=(T_{1}-w_{1},\cdots,T_{m}-w_{m})$, it is straightforward to verify that $\ker\mathscr{D}_{\mathbf{T}-w}=\bigcap \limits_{i=1}^{m}\ker(T_{i}-w_{i}).$

\begin{definition}\cite{CD2}
For positive integer $n \in \mathbb{N}$, the Cowen-Douglas tuple $\mathbf{\mathcal{B}}_{n}^{m}(\Omega)$ consists of commuting $m$-tuples $\mathbf{T}=(T_{1},\cdots,T_{m}) \in \mathcal{L}(\mathcal{H})^m$ satisfying the following conditions:
\begin{itemize}
  \item [(i)]$\text{ran}~\mathscr{D}_{\mathbf{T}-w}$ is closed for all $w \in \Omega$;
  \item [(ii)]$\dim \ker\mathscr{D}_{\mathbf{T}-w}=n$ for all $w \in \Omega$; and
  \item [(iii)] $\mathop{\text{span}}\limits_{w \in \Omega}~\ker\mathscr{D}_{\mathbf{T}-w}=\mathcal{H}$.
\end{itemize}
\end{definition}

The set of all $n$-dimensional subspaces of ${\mathcal H}$, called the Grassmannian, is denoted by
$\mbox{Gr}(n,{\mathcal H})$. A map $E$ from $\Omega$ to $\mbox{Gr}(n,{\mathcal H})$ is called a holomorphic vector bundle if, for any point $z_{0}\in\Omega$, there exists a neighborhood $\Delta$ of $z_{0}$ and $n$ holomorphic $\mathcal H$-valued functions $\{\gamma_{1},\cdots,\gamma_{n}\}$, called a holomorphic frame, defined on $\Delta$ such that $$E(z)=\text{span}\{\gamma_{1}(z),\cdots,\gamma_{n}(z)\}\quad \text{for~every}~z\in\Delta.$$
For any $m$-tuple $\mathbf{T}=(T_{1},\cdots,T_{m})\in\mathbf{\mathcal{B}}_{n}^{m}(\Omega),$ Cowen and Douglas proved in \cite{CD1,CD2} that there exists an associated Hermitian holomorphic vector bundle $E_{\mathbf{T}}$ over $\Omega$ of rank $n$. This bundle is defined as
$$E_\mathbf{T}=\{(w, x)\in \Omega\times \mathcal
H: x \in \ker\mathscr{D}_{\mathbf{T}-w}\},\quad \pi(w,x)=w,$$
where $\pi$ denotes the holomorphic map onto $\Omega.$
Moreover, For any holomorphic frame $\{\gamma_{1},\cdots,\gamma_{n}\}$ of $E_\mathbf{T}$, we have
\begin{equation}\label{eq3}
\mathcal{H}=\mathop{\text{span}}\limits_{z\in\Omega}\{\gamma_{i}(z): 1\leq i\leq n\}.
\end{equation}

Recall that a holomorphic cross-section of the Hermitian holomorphic vector bundle $E_{\mathbf{T}}$ is a holomorphic function $\gamma:\Omega\rightarrow\mathcal{H}$ such that, for every $z\in\Omega$, the vector $\gamma(z)$ belongs to the fibre of $E_\mathbf{T}$ over $z$. A holomorphic cross-section $\gamma$ is called a \textit{spanning holomorphic cross-section} if
$$\mathop{\text{span}}\limits_{z\in\Omega}\{\gamma(z)\}=\mathcal{H}.$$
When $\mathbf{T}=(T_{1},\cdots,T_{m})\in\mathbf{\mathcal{B}}_{1}^{m}(\Omega)$, it is evident from (\ref{eq3}) that there is a spanning holomorphic cross-section of $E_{\mathbf{T}}$.
More generally, for single Cowen-Douglas operators $T\in\mathbf{\mathcal{B}}_{n}^{1}(\Omega)$ with $n>1$, Zhu \cite{ZKH} shows that the Hermitian holomorphic vector bundle $E_{T}$ possesses a spanning holomorphic cross-section. For Cowen-Douglas tuples $\mathbf{\mathcal{B}}_{n}^{m}(\Omega)$ with $n, m>1$,  Eschmeier and Schmitt \cite{ES2014} obtained the following result.

\begin{theorem}\cite{ES2014}\label{es}
Let $\Omega\subset\mathbb{C}^{m}$ be a domain of holomorphy, $\mathbf{T}=(T_{1},\cdots,T_{m})\in\mathbf{\mathcal{B}}_{n}^{m}(\Omega)$, and let $\{\gamma_{1},\cdots,\gamma_{n}\}$ be a holomorphic frame of $E_\mathbf{T}$. Then there exist holomorphic functions $\phi_{1},\cdots,\phi_{n}$ such that the mapping $\gamma=\phi_{1}\gamma_{1}+\cdots+\phi_{n}\gamma_{n}:\Omega\rightarrow\mathcal{H}$ is a spanning holomorphic cross-section of $E_\mathbf{T}$, and $\gamma(z)\neq0$ for every $z\in\Omega.$
\end{theorem}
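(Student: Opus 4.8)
The plan is to produce the cross-section by a normal-families/Baire category argument, after first removing the non-vanishing requirement by a normalization. A domain of holomorphy in $\mathbb{C}^{m}$ is a Stein manifold, so $\Omega$ carries the full Oka--Cartan apparatus (Theorems A and B and the Oka--Weil approximation theorem on holomorphically convex compacta). If $n=1$ there is nothing to prove: a rank-one holomorphic frame $\{\gamma_{1}\}$ is automatically nowhere zero, and $(\ref{eq3})$ already says $\gamma_{1}$ spans, so one takes $\phi_{1}\equiv 1$. Assume $n\geq 2$ and fix $\phi_{1}\equiv 1$ once and for all. Then for any $\phi_{2},\dots,\phi_{n}\in\mathcal{O}(\Omega)$ the section $\gamma=\gamma_{1}+\sum_{i\geq 2}\phi_{i}\gamma_{i}$ is nowhere zero, since at each $z$ the coefficient vector $(1,\phi_{2}(z),\dots,\phi_{n}(z))$ is nonzero while $\{\gamma_{1}(z),\dots,\gamma_{n}(z)\}$ is a basis of the fibre $\ker\mathscr{D}_{\mathbf{T}-z}$. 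So the whole problem reduces to: choose $\phi_{2},\dots,\phi_{n}$ making $\overline{\operatorname{span}}_{z\in\Omega}\{\gamma(z)\}=\mathcal{H}$.

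For this I would work in the Fréchet space $X=\mathcal{O}(\Omega)^{\,n-1}$. Fix a countable dense subset $\{x_{j}\}_{j\geq 1}$ of $\mathcal{H}$; then $\gamma$ spans if and only if $x_{j}\in\overline{\operatorname{span}}\{\gamma(z):z\in\Omega\}$ for every $j$. For $j,l\geq 1$ set
$$G_{j,l}=\Bigl\{(\phi_{2},\dots,\phi_{n})\in X:\ \operatorname{dist}\bigl(x_{j},\ \operatorname{span}\{\gamma(z):z\in\Omega\}\bigr)<\tfrac{1}{l}\Bigr\}.$$
Each $G_{j,l}$ is open: a witnessing finite combination $\sum_{s}c_{s}\gamma(\zeta_{s})$ involves only finitely many points $\zeta_{s}$, and a perturbation of $(\phi_{i})$ that is small on the finite set $\{\zeta_{s}\}$ moves that combination by an arbitrarily small amount. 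Hence, by the Baire category theorem, it suffices to show that every $G_{j,l}$ is dense; then any point of the dense $G_{\delta}$ set $\bigcap_{j,l}G_{j,l}$ gives a spanning $\gamma$, which by the normalization is also nowhere zero.

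The density of $G_{j,l}$ is the technical core. Given $(\phi_{i}^{0})\in X$, a holomorphically convex compact $K\subset\Omega$, and $\eta>0$, one must find $(\phi_{i})\in G_{j,l}$ with $\sup_{K}|\phi_{i}-\phi_{i}^{0}|<\eta$. The key geometric input is that $\overline{\operatorname{span}}\{\gamma_{i}(w):1\leq i\leq n,\ w\in\Omega\setminus K\}=\mathcal{H}$: a vector orthogonal to all of these would make each holomorphic function $w\mapsto\langle\gamma_{i}(w),v\rangle$ vanish on the nonempty open set $\Omega\setminus K$, hence on all of $\Omega$, so $v=0$ by $(\ref{eq3})$. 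Thus $x_{j}$ is approximated within $\tfrac{1}{2l}$ by $\sum_{p=1}^{P}y_{p}$ with distinct $w_{1},\dots,w_{P}\in\Omega\setminus K$ and $y_{p}\in\ker\mathscr{D}_{\mathbf{T}-w_{p}}$. Writing $y_{p}=\sum_{i}d_{i}^{(p)}\gamma_{i}(w_{p})$, I would prescribe the values $\phi_{i}(w_{p})$ for $i\geq 2$ so that $\gamma(w_{p})$ becomes a scalar multiple of $y_{p}$ when $d_{1}^{(p)}\neq 0$; when $d_{1}^{(p)}=0$ I would introduce one auxiliary point $w_{p}'$ near $w_{p}$ and set $\phi_{i}(w_{p}')=0$, so that $y_{p}=\gamma(w_{p})-\gamma_{1}(w_{p})$ lies within $\|\gamma_{1}(w_{p}')-\gamma_{1}(w_{p})\|$ of $\operatorname{span}\{\gamma(z):z\}$, which is small by continuity of $\gamma_{1}$. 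Since the resulting finite set $Z$ of prescription points is disjoint from $K$ and finite sets are $\mathcal{O}(\Omega)$-convex, $K\cup Z$ has holomorphic hull $K\cup Z$, so the Oka--Weil theorem produces $\phi_{i}\in\mathcal{O}(\Omega)$ within $\eta$ of $\phi_{i}^{0}$ on $K$ and arbitrarily close to the prescribed values on $Z$; taking the errors small enough puts $x_{j}$ within $\tfrac{1}{l}$ of $\operatorname{span}\{\gamma(z):z\}$, i.e.\ $(\phi_{i})\in G_{j,l}$.

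The main obstacle is precisely the non-compactness of $\Omega$: neither "spanning" nor "nowhere vanishing" is an open condition in the compact-open topology on $\mathcal{O}(\Omega)^{n}$, so there is no direct "a generic section works" shortcut. The proposed route circumvents this by separating the two requirements --- non-vanishing is made automatic by freezing $\phi_{1}\equiv 1$ --- and by recasting spanning as the countable intersection $\bigcap_{j,l}G_{j,l}$ of sets that genuinely are open. The actual work, and the only place where both the Steinness of $\Omega$ and the spanning property $(\ref{eq3})$ of the frame are used, is the density of $G_{j,l}$, that is, the approximation step above; I expect the bookkeeping there (choice of the points $w_{p}$, the auxiliary points, and the error tolerances) to be the delicate part, while the topological and sheaf-theoretic ingredients are standard.
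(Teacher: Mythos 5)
The paper does not prove Theorem \ref{es} at all --- it is quoted from \cite{ES2014} --- so your argument can only be judged on its own and against the cited source, not against anything in the text. Your route is sound in outline and is a genuinely self-contained alternative: freezing $\phi_{1}\equiv 1$ makes non-vanishing automatic (the coefficient vector $(1,\phi_{2}(z),\dots,\phi_{n}(z))$ is never zero and $\{\gamma_{1}(z),\dots,\gamma_{n}(z)\}$ is a basis of the fibre), and the spanning property is then produced by a Baire-category argument in the Fr\'echet space $\mathcal{O}(\Omega)^{\,n-1}$, with the density of the open sets $G_{j,l}$ obtained from Oka--Weil approximation after prescribing values at finitely many points of $\Omega\setminus K$. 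What this buys is that the functions $\phi_{i}$ never have to be constructed explicitly, at the price of concentrating all the work in the density step --- which is indeed the only place where the Steinness of $\Omega$ (compactness of $\mathcal{O}(\Omega)$-hulls, Oka--Weil) and the spanning property (\ref{eq3}) are used. The supporting steps you sketch are correct: $G_{j,l}$ is open because a witnessing finite combination is controlled by sup-norms on a finite, hence compact, set; the identity theorem on the connected set $\Omega$ gives $\overline{\operatorname{span}}\{\gamma_{i}(w):1\leq i\leq n,\ w\in\Omega\setminus K\}=\mathcal{H}$; and the auxiliary-point device for $d_{1}^{(p)}=0$ works (alternatively, replace $y_{p}$ by $y_{p}+\varepsilon\gamma_{1}(w_{p})$, which has nonzero first coordinate and perturbs the approximation by only $\varepsilon\Vert\gamma_{1}(w_{p})\Vert$).

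One step is stated too quickly: the assertion that $K\cup Z$ equals its $\mathcal{O}(\Omega)$-hull ``since finite sets are $\mathcal{O}(\Omega)$-convex.'' The union of two disjoint $\mathcal{O}(\Omega)$-convex compacts need not be $\mathcal{O}(\Omega)$-convex, so this requires the (standard, easy) lemma that adjoining finitely many points to an $\mathcal{O}(\Omega)$-convex compact preserves convexity: given $q\notin K\cup\{p\}$, pick $g_{1}\in\mathcal{O}(\Omega)$ with $\sup_{K}|g_{1}|<|g_{1}(q)|=1$ and $h\in\mathcal{O}(\Omega)$ with $h(p)=0$, $h(q)=1$, and use $g_{1}^{N}h$ for $N$ large; then induct on the number of points. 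With that lemma supplied --- or, more simply, by invoking approximation on $K$ combined with exact interpolation at the finite set $Z$ (available on a Stein domain via Cartan's Theorem B, and removing much of your error bookkeeping) --- your proof goes through.
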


\section{Cyclicity of the class $\mathcal{B}_{n}^{m}(\Omega)$}\label{sec2}
This section begins by presenting two lemmas, which are crucial for the subsequent proof that the Cowen-Douglas tuple is cyclic. Next, we show that a commuting $m$-tuple in $\mathbf{\mathcal{B}}_{1}^{m}(\Omega)$ is cyclic. Finally, leveraging the properties of spanning holomorphic cross-sections, we establish that general Cowen-Douglas tuples are cyclic as well.

\begin{lemma}\label{lem1}
If $l$ and $k$ are positive integers, $\eta\in\mathbb{Z}_{+}^{m}$ satisfies  $|\eta|=l$, and
$$\beta=(k+1+l)\epsilon-\eta\in\mathbb{Z}_{+}^{m}.$$
Then for any $\alpha\in\mathbb{Z}_{+}^{m}$ with $|\alpha|=l$ and $\alpha\neq\eta$,
$$(\beta+\alpha)!>(\beta+\eta)!$$
\end{lemma}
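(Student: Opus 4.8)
The plan is to reduce the inequality to a statement about redistributing ``mass'' among the coordinates of a multi-index and then exploit the log-convexity of the factorial. Write $N=k+1+l$, so that $\beta_i=N-\eta_i$; since $|\eta|=l$ forces $\eta_i\le l$, every $\beta_i\ge k+1\ge 2$, and hence each coordinate of $\beta+\alpha$ is $\ge k+1\ge 1$ for any $\alpha\in\mathbb{Z}_+^m$. Note that $(\beta+\eta)_i=N$ for every $i$, so $(\beta+\eta)!=(N!)^m$, and that $\sum_i(\beta+\alpha)_i=|\beta|+l=\sum_i(\beta+\eta)_i=mN$ because $|\alpha|=|\eta|=l$. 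Thus $\beta+\alpha$ and $\beta+\eta$ have the same total, while $\beta+\eta$ is the constant multi-index $(N,\dots,N)$ and, since $\alpha\neq\eta$, the multi-index $\beta+\alpha$ is not constant.

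The elementary engine is the observation that, for integers $a>b\ge 0$, moving one unit from the smaller coordinate to the larger one does not decrease the product of the factorials of the coordinates: $(a-1)!\,(b+1)!/(a!\,b!)=(b+1)/a\le 1$, with strict inequality precisely when $a\ge b+2$. Starting from $\gamma:=\beta+\alpha$, I repeatedly select a coordinate $\gamma_i$ of maximal value $a$ and a coordinate $\gamma_j$ of minimal value $b$ (so $a>b$ as long as $\gamma$ is non-constant) and pass to the multi-index obtained by replacing these two entries with $a-1$ and $b+1$. Each such step keeps every coordinate $\ge 1$ (using $\beta_i\ge k+1$), keeps the total equal to $mN$, and does not increase $\prod_i\gamma_i!$.

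To upgrade this to a strict inequality I use the dichotomy that a non-constant multi-index in $\mathbb{Z}_+^m$ with total $mN$ must satisfy $\max_i\gamma_i-\min_i\gamma_i\ge 2$: if its entries took only the consecutive values $M$ and $M+1$, with $q$ of them equal to $M+1$, the total would be $mM+q=mN$, forcing $q=m(N-M)\in\{0,m\}$, i.e. $\gamma$ would be constant. Hence the first step applied to $\beta+\alpha$ has $a\ge b+2$ and is strict; moreover $\sum_i\gamma_i^2$ drops by $2(a-b-1)\ge 2$ at every step, so the process terminates, necessarily at a constant multi-index, which by the sum constraint is $(N,\dots,N)$. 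Chaining the inequalities yields $(\beta+\alpha)!>(N!)^m=(\beta+\eta)!$.

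There is no substantial obstacle; the only points needing a little care are checking that the balancing step never pushes a coordinate below zero (handled by $\beta_i\ge k+1$) and verifying the ``$\max-\min\ge 2$'' dichotomy that makes the first step strict. Alternatively, one may phrase the whole argument in one line via Karamata's inequality: $n\mapsto\log n!$ is strictly convex since its first differences $\log(n+1)$ are strictly increasing, $\beta+\alpha$ majorizes the constant multi-index $\beta+\eta$ as they share the same sum, and $\beta+\alpha$ is not a permutation of $\beta+\eta$; the strict form of Karamata's inequality then gives $\sum_i\log\bigl((\beta+\alpha)_i!\bigr)>\sum_i\log\bigl((\beta+\eta)_i!\bigr)$.
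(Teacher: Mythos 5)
Your proof is correct, but it travels a different road than the paper. The paper's argument is a one-shot computation: writing $N=k+1+l$, it notes $\beta+\eta=(N,\dots,N)$ and evaluates the ratio $(\beta+\alpha)!/(\beta+\eta)!$ directly by pairing the coordinates where $\alpha_j>\eta_j$ against those where $\alpha_j<\eta_j$; since $|\alpha|=|\eta|$ the total excess equals the total deficit, so the ratio becomes a quotient of two products with the same number of integer factors, every numerator factor being $>N$ and every denominator factor being $\le N$, whence the ratio exceeds $1$. You instead run a discrete smoothing/majorization argument: same normalization ($\beta+\eta$ is the constant vector of sum $mN$, $\beta+\alpha$ a non-constant vector of the same sum), then repeated balancing steps justified by strict log-convexity of the factorial, with strictness at the first step guaranteed by your ``$\max-\min\ge 2$'' dichotomy (which is the one genuinely non-obvious point, and you verify it correctly using divisibility of the total by $m$), and termination via the decrease of $\sum_i\gamma_i^2$; the Karamata remark packages the same idea. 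Both proofs rest on the same structural fact that $\beta+\eta$ is constant, but yours proves the more general principle that among nonnegative integer vectors of fixed sum the constant vector strictly minimizes $\prod_i\gamma_i!$ (indeed it works for any strictly log-convex weight), while the paper's is shorter, fully explicit, and in fact exhibits the ratio as a concrete product of factors each at least $(k+l+2)/(k+1+l)$-type size, which is what later feeds the quantitative estimates in Theorem \ref{thm2}. Two cosmetic points in your write-up: the parenthetical ``keeps every coordinate $\ge1$ (using $\beta_i\ge k+1$)'' is not quite the right justification after the first step --- nonnegativity is automatic because you only decrement the current maximum, which is at least the average $N$ --- and your sentence about ``moving one unit from the smaller coordinate to the larger one'' describes the unbalancing direction while the displayed ratio is the balancing step; neither affects correctness.
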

\begin{proof}
Setting $\eta=(i_{1},\cdots,i_{m})$, we have
 $$\beta+\alpha=(k+1+l-i_{1}+\alpha_{1},k+1+l-i_{2}+\alpha_{2},\cdots, k+1+l-i_{m}+\alpha_{m})$$
and $$\beta+\eta=(k+1+l,k+1+l,\cdots, k+1+l),$$
it follows that, for any $\alpha\in\mathbb{Z}_{+}^{m}$ with $|\alpha|=l$ and $\alpha\neq\eta$, $(\beta+\alpha)!>(\beta+\eta)!$ if and only if
\begin{equation*}\label{eq2}
(k+1+l-i_{1}+\alpha_{1})!(k+1+l-i_{2}+\alpha_{2})!\cdots(k+1+l-i_{m}+\alpha_{m})!>[(k+1+l)!]^{m}.
\end{equation*}

Since $\alpha\in\mathbb{Z}_{+}^{m}$, $|\eta|=|\alpha|$ and $\eta\neq\alpha$, there exists a positive integer $s,$ $2\leq s\leq m,$ such that
$$\alpha_{j_{1}}\neq i_{j_{1}},\quad \alpha_{j_{2}}\neq i_{j_{2}}, \quad\cdots, \quad\alpha_{j_{s}}\neq i_{j_{s}},$$ where $j_{1},j_{2}, \cdots, j_{s}\in\{1,2,\cdots,m\}$ and $j_{p}\neq j_{q}$ if $p\neq q$. Without loss of generality, assume that
$$\alpha_{j_{1}}>i_{j_{1}},~\cdots,~\alpha_{j_{p}}>i_{j_{p}},~\alpha_{j_{p+1}}<i_{j_{p+1}},~\cdots, ~\alpha_{j_{s}}<i_{j_{s}}.$$
From the condition $|\alpha|=|\eta|$, we conclude that
$$(\alpha_{j_{1}}+\alpha_{j_{2}}+\cdots+\alpha_{j_{p}})-(i_{j_{1}}+i_{j_{2}}+\cdots+i_{j_{p}})
=(i_{j_{p+1}}+i_{j_{p+2}}+\cdots+i_{j_{s}})-(\alpha_{j_{p+1}}+\alpha_{j_{p+2}}+\cdots+\alpha_{j_{s}}),$$
which implies that
\begin{equation*}
\begin{array}{lll}
\frac{(\beta+\alpha)!}{(\beta+\eta)!}&=&\frac{\prod\limits_{t=1}^{m}(k+1+l-i_{t}+\alpha_{t})!}{[(k+1+l)!]^{m}}\\
&=&\frac{\left[\prod\limits_{t=1}^{p}\left(k+1+l+(\alpha_{j_{t}}-i_{j_{t}})\right)!\right]
\left[\prod\limits_{t=p+1}^{s}\left(k+1+l-(i_{j_{t}}-\alpha_{j_{t}})\right)!\right]
}{[(k+1+l)!]^{s}}\\
&=&\frac{\prod\limits_{t=1}^{p}\left[\prod\limits_{q=1}^{\alpha_{j_{t}}-i_{j_{t}}}(k+1+l+q)\right]}
{\prod\limits_{t=p+1}^{s}\left[\prod\limits_{q=1}^{i_{j_{t}}-\alpha_{j_{t}}}\left(k+1+l-(i_{j_{t}}-\alpha_{j_{t}})+q\right)\right]}\\
&>&1,
\end{array}
\end{equation*}
and finally, we conclude that $(\beta+\alpha)!>(\beta+\eta)!$.
\end{proof}

\begin{lemma}\label{cor1}
Let $l$ and $k$ be positive integers, $\eta\in\mathbb{Z}_{+}^{m}$ with $|\eta|=l$, and
$\beta=(k+1+l)\epsilon\in\mathbb{Z}_{+}^{m}$. Then for any $\alpha\in\mathbb{Z}_{+}^{m}$ with $|\alpha|\geq l$ and $\alpha\neq\eta$,
we have $$(\beta+\alpha-\eta)!>\beta!.$$
\end{lemma}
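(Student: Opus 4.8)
The plan is to reduce to Lemma \ref{lem1} by a coordinatewise case split. First note the statement is meaningful: since $|\eta|=l$ forces $\eta_j\le l$ for every $j$, each coordinate of $(k+1+l)\epsilon-\eta$ is at least $k+1\ge 1$, so $\beta-\eta\in\mathbb{Z}_+^m$ and hence $\beta+\alpha-\eta\in\mathbb{Z}_+^m$. Observe also that the multi-index called $\beta$ in Lemma \ref{lem1} is precisely this $(k+1+l)\epsilon-\eta=\beta-\eta$, that $(\beta-\eta)+\eta=\beta$, and that throughout we will use only the elementary facts that $r!\le s!$ whenever $0\le r\le s$ and that $(\gamma+\delta)!\ge\gamma!$ coordinatewise for $\gamma,\delta\in\mathbb{Z}_+^m$.

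\emph{Case 1: $\alpha_j\ge\eta_j$ for all $j$.} Then $\alpha-\eta\in\mathbb{Z}_+^m$, and it is nonzero because $\alpha\ne\eta$; writing out $(\beta+\alpha-\eta)!=\prod_j\bigl(k+1+l-\eta_j+\alpha_j\bigr)!$ and comparing coordinate by coordinate with $\beta!=\prod_j(k+1+l)!$ gives $(\beta+\alpha-\eta)!\ge\beta!$, with strict inequality in any coordinate where $\alpha_j>\eta_j$. Hence $(\beta+\alpha-\eta)!>\beta!$.

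\emph{Case 2: $\alpha_s<\eta_s$ for some $s$.} Here I would build an auxiliary $\alpha'\in\mathbb{Z}_+^m$ with $\alpha'\le\alpha$ coordinatewise, $\alpha'_s=\alpha_s$, and $|\alpha'|=l$. Such an $\alpha'$ exists because, fixing the $s$-th coordinate and letting the others decrease from $\alpha_t$ down to $0$, the attainable values of $\sum_t\alpha'_t$ sweep out the whole integer interval $[\alpha_s,\,|\alpha|]$, and $\alpha_s<\eta_s\le l\le|\alpha|$ places $l$ in that interval. Since $\alpha'_s=\alpha_s<\eta_s$ we have $\alpha'\ne\eta$ while $|\alpha'|=l=|\eta|$, so Lemma \ref{lem1} applies with this $\alpha'$ and yields $\bigl((\beta-\eta)+\alpha'\bigr)!>\bigl((\beta-\eta)+\eta\bigr)!=\beta!$. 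On the other hand $\beta+\alpha-\eta\ge(\beta-\eta)+\alpha'$ coordinatewise with both sides in $\mathbb{Z}_+^m$, so $(\beta+\alpha-\eta)!\ge\bigl((\beta-\eta)+\alpha'\bigr)!$; combining the two inequalities gives $(\beta+\alpha-\eta)!>\beta!$.

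The only delicate point — and the step I would double-check — is the construction of $\alpha'$ in Case 2, i.e.\ that one can shave the excess $|\alpha|-l$ off the coordinates other than $s$ without making any coordinate negative; this is exactly what the bound $\alpha_s<\eta_s\le l$ secures. (Alternatively one can bypass the case split: with $N=k+1+l$ one has $\tfrac{(N+\gamma)!}{N!}\ge (N+1)^{\gamma}$ for every integer $\gamma$ with $N+\gamma\ge 1$, whence $\tfrac{(\beta+\alpha-\eta)!}{\beta!}\ge (N+1)^{|\alpha|-l}\ge 1$; equality in all coordinates would force each $\alpha_t-\eta_t\in\{0,1\}$, and then $\sum_t(\alpha_t-\eta_t)=|\alpha|-l$ together with $|\alpha|=l$ would give $\alpha=\eta$, a contradiction, so the inequality is strict.)
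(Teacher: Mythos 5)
Your argument is correct, but it takes a genuinely different route from the paper's. The paper proves the lemma by induction on $|\alpha|$: Lemma \ref{lem1} settles the base level $|\alpha|=l$, and the inductive step writes $\alpha=\xi+\epsilon_{i}$ and climbs upward using $(\beta+\xi+\epsilon_{i}-\eta)!>(\beta+\xi-\eta)!$ (valid since every coordinate of $\beta+\xi-\eta$ is at least $k+1>0$), with a small split according to whether $\xi=\eta$; the case $|\alpha|>l+1$ is dispatched with a ``similarly,'' i.e.\ the same step iterated. You instead reduce downward without induction: when $\alpha\geq\eta$ coordinatewise a direct coordinate-by-coordinate comparison already gives strict inequality, and when some $\alpha_{s}<\eta_{s}$ you shave $\alpha$ down to a minorant $\alpha'\leq\alpha$ of weight exactly $l$ keeping $\alpha'_{s}=\alpha_{s}<\eta_{s}$ (so $\alpha'\neq\eta$), apply Lemma \ref{lem1} to $\alpha'$, and finish with monotonicity of the multifactorial. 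The delicate point you flag is fine: the sweep argument works precisely because $\alpha_{s}<\eta_{s}\leq l\leq|\alpha|$, so $l$ lies in the attainable interval $[\alpha_{s},|\alpha|]$. Both routes rest on Lemma \ref{lem1} plus factorial monotonicity; the paper's induction is more mechanical, while your case split avoids induction, and your parenthetical alternative with the bound $(N+\gamma)!/N!\geq(N+1)^{\gamma}$ is a clean self-contained proof that bypasses Lemma \ref{lem1} entirely (and in fact recovers its statement as the special case $|\alpha|=l$).
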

\begin{proof}
From Lemma \ref{lem1}, it follows that for any $\zeta\in\mathbb{Z}_{+}^{m}$ satisfying $|\zeta|=l$ and $\zeta\neq\eta$, we have
\begin{equation}\label{eq4}
(\beta+\zeta-\eta)!>\beta!.\end{equation}
For any $\epsilon_{i}, i=1,2,\cdots, m$, from
$\beta=(k+1+l)\epsilon> \epsilon$, it follows that
\begin{equation}\label{eq5}
\left(\beta+(\eta+\epsilon_{i})-\eta\right)!=(\beta+\epsilon_{i})!>\beta!.\end{equation}
For any $\alpha\in\mathbb{Z}_{+}^{m}$ with $|\alpha|=l+1$, there exists a $i\in\{1,2,\cdots,m\}$ and $\xi\in\mathbb{Z}_{+}^{m}$ with $|\xi|=l$ such that
$$\alpha=\xi+\epsilon_{i}.$$
If $\xi\neq\eta$, then from (\ref{eq4}), it follows that $$(\beta+\alpha-\eta)!=(\beta+\xi+\epsilon_{i}-\eta)!>(\beta+\xi-\eta)!>\beta!$$
If $\xi=\eta$, then from (\ref{eq5}), it follows that
$$(\beta+\alpha-\eta)!=(\beta+\xi+\epsilon_{i}-\eta)!=(\beta+\xi)!>\beta!.$$
Thus, for any $\alpha\in\mathbb{Z}_{+}^{m}$ satisfying $|\alpha|=l+1$, we have $(\beta+\alpha-\eta)!>\beta!$. Similarly, we can also conclude that
$(\beta+\alpha-\eta)!>\beta!$ holds for any $\alpha\in\mathbb{Z}_{+}^{m}$ and $|\alpha|>l+1$.
Thus, the proof is complete.
\end{proof}

With the preceding lemma established, we are now ready to prove the main theorem. Let $\mathbf{T}=(T_{1},\cdots,T_{m})\in\mathbf{\mathcal{B}}_{1}^{m}(\Omega)$, and let $\gamma$ be a holomorphic frame of $E_{\mathbf{T}}$.
Assume $0\in\Omega$, and write $\gamma(z)=\sum\limits_{\alpha\in\mathbb{Z}_{+}^{m}}a_{\alpha}z^{\alpha}$, where $a_{\alpha}\in\mathcal{H}$.
By (\ref{eq3}), we have
\begin{equation}\label{eq1}
\mathcal{H}=\mathop{\text{span}}\limits_{z\in\Omega}\{\gamma(z)\}=\mathop{\text{span}}\limits_{\alpha\in\mathbb{Z}_{+}^{m}}\{a_{\alpha}\}.
\end{equation}
Notes that $\mathbf{T}^{\beta}\gamma(z)=z^{\beta}\gamma(z)=\sum\limits_{\alpha\in\mathbb{Z}_{+}^{m}}a_{\alpha}z^{\alpha+\beta}
=\sum\limits_{\alpha\geq\beta}a_{\alpha-\beta}z^{\alpha}$, and that $\mathbf{T}^{\beta}\gamma(z)=\sum\limits_{\alpha\in\mathbb{Z}_{+}^{m}}\mathbf{T}^{\beta}a_{\alpha}z^{\alpha}$.
This implies that $\mathbf{T}^{\beta}a_{\alpha}=a_{\alpha-\beta}$ for any $\alpha\geq\beta$; otherwise, $\mathbf{T}^{\beta}a_{\alpha}=0$.

\begin{thm}\label{thm2}
If $\mathbf{T}=(T_{1},\cdots,T_{m})\in\mathbf{\mathcal{B}}_{1}^{m}(\Omega)$, then $\mathbf{T}$ is a cyclic $m$-tuple.
\end{thm}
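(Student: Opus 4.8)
The plan is to produce an explicit cyclic vector for $\mathbf{T}\in\mathbf{\mathcal{B}}_{1}^{m}(\Omega)$ out of the Taylor coefficients $a_\alpha$ of the frame $\gamma$, using the relation $\mathbf{T}^\beta a_\alpha = a_{\alpha-\beta}$ (and $=0$ otherwise) together with the spanning property $\mathcal{H}=\overline{\mathrm{span}}\{a_\alpha:\alpha\in\mathbb{Z}_+^m\}$. The natural candidate is a weighted sum $f=\sum_{\alpha}c_\alpha a_\alpha$ with rapidly decaying scalar weights $c_\alpha$ chosen so that $f$ converges in $\mathcal{H}$ and so that, by applying monomials $\mathbf{T}^\beta$ and taking limits, one can extract each $a_\alpha$ from the closed span of $\{\mathbf{T}^\beta f\}$. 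The two combinatorial lemmas are precisely the tool that lets a single application of an appropriately chosen $\mathbf{T}^\beta$ make the coefficient of the ``target'' $a_\eta$ dominate all the others, after which an inductive/limiting argument peels off the $a_\alpha$ one degree at a time.

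Concretely, I would first fix $f=\sum_{\alpha\in\mathbb{Z}_+^m} \tfrac{1}{\alpha!\,M^{|\alpha|}}\,a_\alpha$ for a constant $M>0$ large enough that $\sum_\alpha \tfrac{\|a_\alpha\|}{\alpha!\,M^{|\alpha|}}<\infty$ (possible since $\gamma$ is holomorphic near $0$, so $\|a_\alpha\|$ grows at most geometrically on a polydisc; using $1/\alpha!$ as well gives plenty of room). Then $f\in\mathcal{H}$ and I claim $\overline{\mathrm{span}}\{\mathbf{T}^\beta f:\beta\in\mathbb{Z}_+^m\}=\mathcal{H}$. Applying $\mathbf{T}^\beta$ gives $\mathbf{T}^\beta f=\sum_{\alpha\ge\beta}\tfrac{1}{\alpha!\,M^{|\alpha|}}a_{\alpha-\beta}=\sum_{\delta\ge 0}\tfrac{1}{(\delta+\beta)!\,M^{|\delta|+|\beta|}}a_\delta$. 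The idea is to show by induction on $l\ge 0$ that every $a_\eta$ with $|\eta|=l$ lies in $\mathcal{M}:=\overline{\mathrm{span}}\{\mathbf{T}^\beta f\}$. For the base case $l=0$ one takes $\beta=k\epsilon$ and lets $k\to\infty$: in $M^{|\beta|}\mathbf{T}^\beta f=\sum_\delta \tfrac{1}{(\delta+k\epsilon)!\,M^{|\delta|}}a_\delta$, the $\delta=0$ term has coefficient $\tfrac{1}{(k\epsilon)!}$ and every $\delta\ne 0$ term is smaller by a factor $(k\epsilon)!/(\delta+k\epsilon)!\to 0$; rescaling by $(k\epsilon)!$ and passing to the limit isolates $a_0$ (one must check the tail sum $\sum_{\delta\ne 0}\tfrac{(k\epsilon)!}{(\delta+k\epsilon)!\,M^{|\delta|}}\|a_\delta\|\to 0$, which follows from $(k\epsilon)!/(\delta+k\epsilon)!\le (k+1)^{-|\delta|}$ and dominated convergence). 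For the inductive step, assuming all $a_\alpha$ with $|\alpha|<l$ are already in $\mathcal{M}$, fix $\eta$ with $|\eta|=l$ and set $\beta=(k+1+l)\epsilon-\eta$, which lies in $\mathbb{Z}_+^m$ for $k$ large; then in $(\beta+\eta)!\,M^{|\beta|}\mathbf{T}^\beta f=\sum_{\delta}\tfrac{(\beta+\eta)!}{(\delta+\beta)!\,M^{|\delta|}}a_\delta$ the coefficient of $a_\eta$ is $1$, the finitely many terms with $|\delta|<l$ are already controlled (subtract them off using the inductive hypothesis), and Lemmas \ref{lem1} and \ref{cor1} guarantee that for $|\delta|\ge l$, $\delta\ne\eta$ one has $(\delta+\beta)!>(\beta+\eta)!$, so those coefficients tend to $0$ as $k\to\infty$; again a dominated-convergence estimate on the tail shows the limit is exactly $a_\eta$ modulo vectors in $\mathcal{M}$, hence $a_\eta\in\mathcal{M}$. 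Combining all $l$ gives $\{a_\alpha\}\subset\mathcal{M}$, so by \eqref{eq1} $\mathcal{M}=\mathcal{H}$ and $f$ is cyclic.

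The main obstacle is the uniform tail control needed to justify the limit $k\to\infty$: one needs, for each target $\eta$, a bound of the form $\sum_{\delta:\,|\delta|\ge l,\ \delta\ne\eta}\tfrac{(\beta+\eta)!}{(\beta+\delta)!}\cdot\tfrac{\|a_\delta\|}{M^{|\delta|}}\to 0$ that is not merely termwise. The ratio $(\beta+\eta)!/(\beta+\delta)!$ is at most $1$ for $|\delta|\ge l$ by the Lemmas, and in fact decays like $(k+l)^{-(|\delta|-l)}$ in the ``higher degree'' directions, so choosing $M$ large (depending only on the geometric growth rate of $\|a_\delta\|$, fixed once and for all) makes the sum summable uniformly in $k$ and then dominated convergence applies, with the termwise limit being $0$ for every $\delta\ne\eta$. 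Organizing this estimate cleanly — separating the finitely many low-degree terms (handled by induction) from the infinitely many high-degree ones (handled by the factorial decay plus the weight $M^{-|\delta|}$) — is the technical heart of the argument; once it is in place the rest is bookkeeping. Finally, the same vector $f$ then serves in the general rank-$n$ case in the next step by replacing $\gamma$ with a spanning holomorphic cross-section furnished by Theorem \ref{es}, so it is worth phrasing the convergence lemma in a way that only uses the coefficient relations and \eqref{eq1}, not the rank.
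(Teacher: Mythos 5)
Your overall strategy is the paper's: build a candidate cyclic vector $f=\sum_\alpha c_\alpha a_\alpha$ from the Taylor coefficients of the frame, then induct on the degree $l$, applying $\mathbf{T}^{(k+1+l)\epsilon-\eta}$, subtracting the lower-degree terms via the inductive hypothesis, and letting $k\to\infty$. However, your choice of weights $c_\alpha=\frac{1}{\alpha!\,M^{|\alpha|}}$ breaks the inductive step whenever $m\ge 2$ and $l\ge 1$. The trouble is the competitors $\delta$ of the \emph{same} degree as the target $\eta$: for $|\delta|=|\eta|=l$, $\delta\neq\eta$, Lemma \ref{lem1} does give $(\beta+\delta)!>(\beta+\eta)!$, hence a ratio $<1$, but it gives no decay in $k$. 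Indeed $\frac{(\beta+\eta)!}{(\beta+\delta)!}=\prod_{i}\frac{(k+1+l)!}{(k+1+l+\delta_i-\eta_i)!}\to 1$ as $k\to\infty$; for instance $m=2$, $\eta=(1,0)$, $\delta=(0,1)$ gives the ratio $\frac{k+2}{k+3}$. So in your rescaled vector $(\beta+\eta)!\,M^{|\beta|}\mathbf{T}^{\beta}f$ every degree-$l$ coefficient survives with the same limiting weight $M^{-l}$ (incidentally the target's coefficient is $M^{-l}$, not $1$, but that is only a normalization slip), and what you extract modulo $\mathcal{M}$ in the limit is a multiple of $\sum_{|\delta|=l}a_\delta$, not $a_\eta$; moreover this limit is the same for every choice of target $\eta$ of degree $l$, so varying $\eta$ does not separate the coefficients. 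Your assertions that the termwise limit is $0$ for every $\delta\neq\eta$ and that the ratio decays like $(k+l)^{-(|\delta|-l)}$ are correct only for $|\delta|>l$; for $|\delta|=l$ the exponent is $0$. (Your base case $l=0$ is fine because degree $0$ has a single index — this is also why a weight like $1/k!$ suffices for a single operator but not for tuples.)

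This is precisely the point of the paper's unusual weight $\xi_\alpha=(|\alpha|!)^{-\alpha!}$: for same-degree competitors one has $|\beta+\delta|=|\beta+\eta|$, so the weight ratio becomes $\xi_{\beta+\delta}/\xi_{\beta+\eta}=\bigl(|\beta+\eta|!\bigr)^{-[(\beta+\delta)!-(\beta+\eta)!]}$, and Lemmas \ref{lem1}--\ref{cor1} enter through the \emph{exponent}: the factorial inequality guarantees the exponent is a positive integer, so the ratio is at most $\frac{1}{(m(k+1+l))!}$, which kills the same-degree (and higher-degree) terms uniformly and summably as $k\to\infty$. To repair your argument you would either have to replace $1/(\alpha!\,M^{|\alpha|})$ by a weight of this doubly-exponential type (anything making $c_{\beta+\delta}/c_{\beta+\eta}\to 0$ uniformly over $|\delta|\ge l$, $\delta\neq\eta$), or add a genuinely new mechanism for separating the finitely many same-degree coefficients; neither is present in the proposal as written, so the inductive step — and with it the proof — has a real gap.
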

\begin{proof}
Without loss of generality, we assume $0\in\Omega$ and $\mathbf{T}=(T_{1},\cdots,T_{m}) \in \mathcal{L}(\mathcal{H})^m$.
Let $\gamma$ be a holomorphic frame of $E_{\mathbf{T}}$ and $$\gamma(z)=\sum\limits_{\alpha\in\mathbb{Z}_{+}^{m}}a_{\alpha}z^{\alpha},\quad\text{where}\quad a_{\alpha}\in\mathcal{H}.$$
Since $\Omega$ is a bounded domain, there exist $\delta=(\delta_{1},\cdots,\delta_{m})>0$ and  a constant $M>0$ such that $|z_{i}|\leq \delta_{i}$ for $1\leq i\leq m,$ and $\Vert a_{\alpha}\Vert\leq \frac{M}{\delta^{\alpha}}$ for any $z=(z_{1},\cdots, z_{m})\in\Omega$ and $\alpha\in\mathbb{Z}_{+}^{m}$.
Setting $$f=\sum\limits_{\alpha\in\mathbb{Z}_{+}^{m}}\xi_{\alpha}a_{\alpha},$$ where $\xi_{\alpha}=\frac{1}{\left(|\alpha|!\right)^{\alpha!}},$ we have
$$\Vert f\Vert\leq\sum\limits_{\alpha\in\mathbb{Z}_{+}^{m}}\xi_{\alpha}\Vert a_{\alpha}\Vert\leq M \sum\limits_{\alpha\in\mathbb{Z}_{+}^{m}}\frac{\delta^{-\alpha}}{\left(|\alpha|!\right)^{\alpha!}}\leq
M \sum\limits_{\alpha\in\mathbb{Z}_{+}^{m}}\frac{\delta^{-\alpha}}{\alpha!}=M e^{\frac{1}{\delta_{1}}+\cdots+\frac{1}{\delta_{m}}}<\infty,$$
and thus $f\in\mathcal{H}.$

Now, we prove that $f$ is the cyclic vector of $\mathbf{T}$, that is, $\mathop{\text{span}}\limits_{\alpha\in\mathbb{Z}_{+}^{m}}\{\mathbf{T}^{\alpha}f\}=\mathcal{H}.$
From $$\mathcal{H}=\mathop{\text{span}}\limits_{z\in\Omega}\{\gamma(z)\}=\mathop{\text{span}}\limits_{\alpha\in\mathbb{Z}_{+}^{m}}\{a_{\alpha}\},$$
it suffices to prove that $\{a_{\alpha}:\alpha\in\mathbb{Z}_{+}^{m}\}\subset\mathop{\text{span}}\limits_{\alpha\in\mathbb{Z}_{+}^{m}}\{\mathbf{T}^{\alpha}f\}$.

For any positive integer $k$ and any $\alpha\in\mathbb{Z}_{+}^{m}\backslash\{0\}$, we have that $((k+1)\epsilon+\alpha)!>((k+1)\epsilon)!$, and then
\begin{eqnarray*}
&&\lim\limits_{k\rightarrow\infty}\left\Vert\frac{1}{\xi_{(k+1)\epsilon}}\mathbf{T}^{(k+1)\epsilon}f-a_{0}\right\Vert\\
&=&\lim\limits_{k\rightarrow\infty}
\left\Vert\sum\limits_{\alpha\in\mathbb{Z}_{+}^{m}}\frac{\xi_{(k+1)\epsilon+\alpha}}{\xi_{(k+1)\epsilon}}a_{\alpha}-a_{0}\right\Vert\\
&=&\lim\limits_{k\rightarrow\infty}
\left\Vert\sum\limits_{\substack{\alpha\in\mathbf{Z}_{+}^{m}  \\ \alpha\neq0 }}\frac{\xi_{(k+1)\epsilon+\alpha}}{\xi_{(k+1)\epsilon}}a_{\alpha}\right\Vert\\
&\leq& M \lim\limits_{k\rightarrow\infty}\sum\limits_{\alpha\in\mathbb{Z}_{+}^{m}\atop \alpha>0}\frac{(|(k+1)\epsilon|!)^{((k+1)\epsilon)!}}{(|(k+1)\epsilon+\alpha|!)^{((k+1)\epsilon+\alpha)!}}\delta^{-\alpha}\\
&=&M \lim\limits_{k\rightarrow\infty}\sum\limits_{\alpha\in\mathbb{Z}_{+}^{m}\atop \alpha>0}\left(\frac{|(k+1)\epsilon|!}{|(k+1)\epsilon+\alpha|!}\right)^{((k+1)\epsilon)!}\frac{\delta^{-\alpha}}{(|(k+1)\epsilon+\alpha|!)^{((k+1)\epsilon+\alpha)!-((k+1)\epsilon)!}}\\
&\leq&\lim\limits_{k\rightarrow\infty} \frac{M}{(k+1)!} \sum\limits_{\alpha\in\mathbb{Z}_{+}^{m}\atop \alpha>0}\frac{\delta^{-\alpha}}{\alpha!}\\
&\leq&\lim\limits_{k\rightarrow\infty}\frac{M}{(k+1)!}  e^{\frac{1}{\delta_{1}}+\cdots+\frac{1}{\delta_{m}}}\\
&=&0.
\end{eqnarray*}
It follows that $a_{0}\in\mathop{\text{span}}\limits_{\alpha\in\mathbb{Z}_{+}^{m}}\{\mathbf{T}^{\alpha}f\}$. Without loss of generality,
we can assume that $$\{a_{\alpha}:\alpha\in\mathbb{Z}_{+}^{m}~\text{and}~0\leq|\alpha|\leq l\}\subset\mathop{\text{span}}\limits_{\alpha\in\mathbb{Z}_{+}^{m}}\{\mathbf{T}^{\alpha}f\}$$ for some non-negative integer $l$.
Then for any $\alpha\in\mathbb{Z}_{+}^{m}$ and $|\alpha|=l+1$, by Lemma \ref{lem1} and Lemma \ref{cor1}, we obtain that
\begin{align*}
&\quad\,\,\,\lim\limits_{k\rightarrow\infty}\left\Vert\frac{1}{\xi_{(k+l+2)\epsilon}}\mathbf{T}^{(k+l+2)\epsilon-\alpha}f-\sum\limits_{\beta\in\mathbb{Z}_{+}^{m}\atop |\beta|\leq l}\frac{\xi_{\beta+(k+l+2)\epsilon-\alpha}}{\xi_{(k+l+2)\epsilon}}a_{\beta}-a_{\alpha}\right\Vert\\
&=\lim\limits_{k\rightarrow\infty}
\left\Vert\sum\limits_{\eta\in\mathbb{Z}_{+}^{m}}\frac{\xi_{\eta+(k+l+2)\epsilon-\alpha}}{\xi_{(k+l+2)\epsilon}}a_{\eta}-
\sum\limits_{\beta\in\mathbb{Z}_{+}^{m}\atop |\beta|\leq l}\frac{\xi_{\beta+(k+l+2)\epsilon-\alpha}}{\xi_{(k+l+2)\epsilon}}a_{\beta}-a_{\alpha}\right\Vert \displaybreak[1] \\
&=\lim\limits_{k\rightarrow\infty}
\left\Vert\sum\limits_{\substack{\eta\in\mathbb{Z}_{+}^{m}  \\ |\eta|\geq l+1, \eta\neq\alpha }}\frac{\xi_{\eta+(k+l+2)\epsilon-\alpha}}{\xi_{(k+l+2)\epsilon}}a_{\eta}\right\Vert\\
&\leq M \lim\limits_{k\rightarrow\infty}\sum\limits_{\substack{\eta\in\mathbb{Z}_{+}^{m}  \\ |\eta|\geq l+1, \eta\neq\alpha }}\frac{(|(k+l+2)\epsilon|!)^{((k+l+2)\epsilon)!}}{(|(k+l+2)\epsilon+\eta-\alpha|!)^{((k+l+2)\epsilon+\eta-\alpha)!}}\delta^{-\eta}\\ 
&=M \lim\limits_{k\rightarrow\infty}\sum\limits_{\substack{\eta\in\mathbb{Z}_{+}^{m}  \\ |\eta|\geq l+1, \eta\neq\alpha }}\left(\frac{|(k+l+2)\epsilon|!}{|(k+l+2)\epsilon+\eta-\alpha|!}\right)^{((k+l+2)\epsilon)!}\frac{\delta^{-\eta}}{(|(k+l+2)\epsilon+\eta-\alpha|!)
^{((k+l+2)\epsilon+\eta-\alpha)!-((k+l+2)\epsilon)!}}\\
&\leq\lim\limits_{k\rightarrow\infty} \frac{M}{(k+1)!} \sum\limits_{\substack{\eta\in\mathbb{Z}_{+}^{m}  \\ |\eta|\geq l+1, \eta\neq\alpha }}\frac{\delta^{-\eta}}{\eta!}\\
&\leq\lim\limits_{k\rightarrow\infty}\frac{M}{(k+1)!}  e^{\frac{1}{\delta_{1}}+\cdots+\frac{1}{\delta_{m}}}\\
&=0.
\end{align*}
Thus, $\{a_{\alpha}:\alpha\in\mathbb{Z}_{+}^{m}~\text{and}~|\alpha|=l+1\}\subset\mathop{\text{span}}\limits_{\alpha\in\mathbb{Z}_{+}^{m}}\{\mathbf{T}^{\alpha}f\}$.
Therefore, $$\{a_{\alpha}:\alpha\in\mathbb{Z}_{+}^{m}\}\subset\mathop{\text{span}}\limits_{\alpha\in\mathbb{Z}_{+}^{m}}\{\mathbf{T}^{\alpha}f\},$$ and hence $f$ is the cyclic vector of $\mathbf{T}$.
\end{proof}

The above theorem holds for a commuting $m$-tuple $\mathbf{T}=(T_{1},\cdots,T_{m})$ in $\mathbf{\mathcal{B}}_{1}^{m}(\Omega)$. Building on this, we extend the proof to arbitrary commuting $m$-tuple $\mathbf{T}=(T_{1},\cdots,T_{m})$ in $\mathbf{\mathcal{B}}_{n}^{m}(\Omega)$, where $n\geq 1.$



\begin{thm}
If $\mathbf{T}=(T_{1},\cdots,T_{m})\in\mathbf{\mathcal{B}}_{n}^{m}(\Omega)$, then $\mathbf{T}$ is a cyclic operator tuple.
\end{thm}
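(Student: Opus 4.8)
The plan is to reduce the general case $\mathbf{T}\in\mathbf{\mathcal{B}}_{n}^{m}(\Omega)$ to the rank-one situation already handled in Theorem \ref{thm2}, using the spanning holomorphic cross-section supplied by Theorem \ref{es}. First I would note that without loss of generality $\Omega$ may be assumed to be a bounded domain of holomorphy containing $0$ (one can always restrict to a suitably small polydisc around a point of $\Omega$, and restriction to a subdomain preserves membership in $\mathbf{\mathcal{B}}_{n}^{m}$ together with the spanning property (\ref{eq3})). Fix a holomorphic frame $\{\gamma_{1},\dots,\gamma_{n}\}$ of $E_{\mathbf{T}}$ on $\Omega$. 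By Theorem \ref{es} there are holomorphic functions $\phi_{1},\dots,\phi_{n}$ on $\Omega$ such that $\gamma:=\phi_{1}\gamma_{1}+\cdots+\phi_{n}\gamma_{n}$ is a nowhere-vanishing spanning holomorphic cross-section of $E_{\mathbf{T}}$, i.e. $\gamma(z)\in\ker\mathscr{D}_{\mathbf{T}-z}$ for all $z$, $\gamma(z)\neq0$, and $\operatorname{span}_{z\in\Omega}\{\gamma(z)\}=\mathcal{H}$.

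The key observation is that the argument in Theorem \ref{thm2} never used that the rank was one, only the following three facts: (a) $\gamma$ is a single holomorphic $\mathcal{H}$-valued map with $\operatorname{span}_{z}\{\gamma(z)\}=\mathcal{H}$; (b) since $\gamma(z)$ lies in $\ker\mathscr{D}_{\mathbf{T}-z}$ we have $T_i\gamma(z)=z_i\gamma(z)$, hence $\mathbf{T}^{\beta}\gamma(z)=z^{\beta}\gamma(z)$; and (c) the Taylor coefficients $a_{\alpha}$ of $\gamma(z)=\sum_{\alpha}a_{\alpha}z^{\alpha}$ satisfy the uniform bound $\|a_{\alpha}\|\le M\delta^{-\alpha}$ coming from Cauchy estimates on a polydisc inside the bounded domain $\Omega$, and obey the shift relation $\mathbf{T}^{\beta}a_{\alpha}=a_{\alpha-\beta}$ for $\alpha\ge\beta$ and $\mathbf{T}^{\beta}a_{\alpha}=0$ otherwise (obtained exactly as in the displayed computation preceding Theorem \ref{thm2}, by matching powers of $z$ in $z^{\beta}\gamma(z)=\mathbf{T}^{\beta}\gamma(z)$). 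All of (a)--(c) hold verbatim for the cross-section $\gamma$ produced above.

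Granting this, I would then define the candidate cyclic vector exactly as before,
$$f=\sum_{\alpha\in\mathbb{Z}_{+}^{m}}\xi_{\alpha}a_{\alpha},\qquad \xi_{\alpha}=\frac{1}{(|\alpha|!)^{\alpha!}},$$
which converges in $\mathcal{H}$ by the same estimate $\|f\|\le M e^{1/\delta_{1}+\cdots+1/\delta_{m}}$, and run the identical induction on $l$: first show $a_{0}\in\operatorname{span}_{\alpha}\{\mathbf{T}^{\alpha}f\}$ by letting $k\to\infty$ in $\xi_{(k+1)\epsilon}^{-1}\mathbf{T}^{(k+1)\epsilon}f$, then, assuming $\{a_{\beta}:|\beta|\le l\}\subset\operatorname{span}_{\alpha}\{\mathbf{T}^{\alpha}f\}$, extract $a_{\alpha}$ for $|\alpha|=l+1$ from $\xi_{(k+l+2)\epsilon}^{-1}\mathbf{T}^{(k+l+2)\epsilon-\alpha}f$ after subtracting the lower-order terms, using Lemma \ref{lem1} and Lemma \ref{cor1} to control the tail. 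Since $\operatorname{span}_{\alpha}\{a_{\alpha}\}=\mathcal{H}$ by (a), this proves $f$ is cyclic for $\mathbf{T}$.

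The only genuinely new input compared with Theorem \ref{thm2} is the existence of the nowhere-vanishing spanning cross-section $\gamma$, which is exactly Theorem \ref{es} (and this is where the domain-of-holomorphy hypothesis enters); the passage to a bounded subdomain is a harmless reduction since cyclicity is witnessed by a single vector and the hypotheses are local. I expect the main point to check carefully is simply that shrinking $\Omega$ to a bounded domain of holomorphy does not destroy condition (iii)/(\ref{eq3}) — which it does not, because a holomorphic frame on the larger domain restricts to one on the smaller, and (\ref{eq3}) for the restricted bundle follows from Cowen--Douglas theory applied to $\mathbf{T}$ viewed over the subdomain. Everything else is a line-by-line repetition of the proof of Theorem \ref{thm2} with "frame" replaced by "spanning cross-section," so I would state it as: \emph{by Theorem \ref{es} choose a spanning holomorphic cross-section $\gamma$ of $E_{\mathbf{T}}$; then the proof of Theorem \ref{thm2} applies word for word to $\gamma$ in place of the rank-one frame, yielding a cyclic vector $f$ for $\mathbf{T}$.}
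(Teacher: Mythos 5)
Your proposal is correct and follows essentially the same route as the paper: reduce to a (bounded) domain of holomorphy containing $0$ via the restriction property $\mathbf{\mathcal{B}}_{n}^{m}(\Omega)\subset\mathbf{\mathcal{B}}_{n}^{m}(\Omega_{0})$, invoke Theorem \ref{es} to obtain a spanning holomorphic cross-section $\gamma$, and then repeat the proof of Theorem \ref{thm2} verbatim with $\gamma$ in place of the rank-one frame, using the same vector $f=\sum_{\alpha}\xi_{\alpha}a_{\alpha}$. Your explicit checklist (a)--(c) of what the rank-one argument actually uses matches exactly what the paper relies on, so no further comment is needed.
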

\begin{proof}
Without loss of generality, we assume $0\in\Omega$ and $\mathbf{T}=(T_{1},\cdots,T_{m}) \in \mathcal{L}(\mathcal{H})^m$.
Next, we assume that $\Omega$ is a domain of holomorphy. Otherwise, since $0\in\Omega$, there exists a domain of holomorphy $\Omega_{0}$ that contains the origin. From $\mathbf{\mathcal{B}}_{n}^{m}(\Omega)\subset\mathbf{\mathcal{B}}_{n}^{m}(\Omega_{0})$ (see Lemma 4.4 in \cite{ES2014}), it follows that
$\mathbf{T}=(T_{1},\cdots,T_{m})\in\mathbf{\mathcal{B}}_{n}^{m}(\Omega_{0}).$

Let $\{\gamma_{1},\cdots, \gamma_{n}\}$ be a holomorphic frame of $E_{\mathbf{T}}$. From Theorem \ref{es}, there exist holomorphic functions $\phi_{1},\cdots, \phi_{n}\in\mathcal{O}(\Omega)$ such that $$\gamma=\phi_{1}\gamma_{1}+\cdots+\phi_{n}\gamma_{n}:\Omega\rightarrow\mathcal{H}$$ is
a spanning holomorphic cross-section for $\mathbf{T}$, that is, $$\mathcal{H}=\mathop{\text{span}}\limits_{z\in\Omega}\{\gamma(z)\}.$$ Since $0\in\Omega,$ we write $\gamma(z)=\sum\limits_{\alpha\in\mathbb{Z}_{+}^{m}}a_{\alpha}z^{\alpha}$, where $a_{\alpha}\in\mathcal{H}$, then
$$\mathcal{H}=\mathop{\text{span}}\limits_{z\in\Omega}\{\gamma(z)\}=\mathop{\text{span}}\limits_{\alpha\in\mathbb{Z}_{+}^{m}}\{a_{\alpha}\}.$$
Letting $f=\sum\limits_{\alpha\in\mathbb{Z}_{+}^{m}}\xi_{\alpha}a_{\alpha}$, where $\xi_{\alpha}=\frac{1}{\left(|\alpha|!\right)^{\alpha!}},$
and proceeding similarly to the proof of Theorem \ref{thm2}, we conclude that $\mathbf{T}$ is a cyclic $m$-tuple.
\end{proof}


\begin{thebibliography}{99}

\bibitem{CD1}{M. J. Cowen and R. G. Douglas}, \emph{Complex geometry and operator theory}, Acta. Math.  $\mathbf{141}$ (1978), no. 1, 187-261.

\bibitem{CD2}{M. J. Cowen and R. G. Douglas}, \emph{Equivalence of connections}, Acta. Math.  $\mathbf{56}$ (1985), no. 1, 39-91.


\bibitem{ES2014}{J. Eschmeier, J. Schmitt}, \emph{Cowen-Douglas operators and dominating sets}, J. Operat. Theor., $\mathbf{72}$ (2014), no. 1, 277-290.
    
\bibitem{Foias1963}{C. Foia\c{s}}, \emph{A remark on the universal model for contractions of G. C. Rota}, Comm. Acad. R. P. Rom\^{a}ne, $\mathbf{13}$ (1963), 349-352.
    
\bibitem{Halmos1982}{P. R. Halmos}, \emph{A Hilbert Space Problem Book}, Springer-Verlag, 1982.

\bibitem{lin1988}{Q. Lin}, \emph{Cyclicity of Cowen-Douglas operators}, Acta Mathematica Sinica, $\mathbf{4}$ (1988), no. 3, 284-288.

\bibitem{Rota1960}{G. C. Rota}, \emph{On models for linear operators}, Comm. Pure. Appl. Math., $\mathbf{13}$ (1960), 496-472.

\bibitem{ZKH}{K. Zhu}, \emph{Operators in Cowen-Douglas classes}, Ill. J. Math. $\mathbf{44}$ (2000), no. 4, 767-783.

\end{thebibliography}
\end{document}